\numberwithin{equation}{section}
\numberwithin{figure}{section}
\theoremstyle{plain}
\newtheorem{thm}{Theorem}[section]
\theoremstyle{plain}
\newtheorem{lem}[thm]{Lemma}
\theoremstyle{definition}
\newtheorem{example}[thm]{Example}
\begin{document}

\title{Noncommutative Semialgebraic Sets in Nilpotent Variables}

\author{Terry A. Loring}

\author{Tatiana Shulman}

\address{Department of Mathematics and Statistics, University of New Mexico,
Albuquerque, NM 87131, USA.}

\address{Department of Mathematics, University of Copenhagen, Universitetsparken
5, DK-2100 Copenhagen \O , Denmark}

\keywords{$C^{*}$-algebra, relation, projective, lifting.}

\subjclass[2000]{46L85, 47B99 }

\begin{abstract}
We solve the lifting problem in $C^{*}$-algebras for many sets of
relations that include the relations $x_{j}^{N_{j}}=0$ on each variable.
The remaining relations must be of the form
$\left\Vert p(x_{1},\ldots,x_{n})\right\Vert \leq C$
for $C$ a positive constant and $p$ a noncommutative $*$-polynomial
that is in some sense homogeneous. For example, we prove liftability
for the set of relations  
\[
x^{3}=0,\ y^{4}=0,\ z^{5}=0,\ xx^{*}+yy^{*}+zz^{*}\leq1.
\]
Thus we find more noncommutative semialgebraic sets that have the
topology of noncommutative absolute retracts.
\end{abstract}
\maketitle

\section{Introduction}

Lifting problems involving norms and star-polynomials are fundamental
in $C^{*}$-algebras. They arise in basic lemmas in the subject, as
we shall see in a moment. They also arise in descriptions of the boundary
map in $K$-theory, in technical lemmas on inductive limits, and have
of course been around in operator theory. Much of our understanding
of the Calkin algebra comes from having found properties of its cosets
that exist only when some operator in a coset has that property.

Let $A$ denote a $C^{*}$-algebra and let $I$ be an ideal in $A.$
The quotient map will be denoted $\pi:A\rightarrow A/I.$ Of course
$A/I$ is a $C^{*}$-algebra, but let us ponder how we know this.
The standard proof uses an approximate unit $u_{\lambda}$ and an
approximate lifting property. The lemma used is that for any
approximate unit $u_{\lambda},$ and any $a$ in $A,$ 
\[
\lim_{\lambda}\left\Vert a(1-u_{\lambda})\right\Vert 
= \left\Vert \pi(a)\right\Vert 
\]
and trivially we obtain as a corollary
\[
\lim_{\lambda}\left\Vert (1-u_{\lambda})b(1-u_{\lambda})\right\Vert 
= \left\Vert \pi(b)\right\Vert .
\]
For a large $\lambda,$ the lift $\bar{x}=a(1-u_{\lambda})$ of $\pi(a)$
approximately achieves \emph{two} norm conditions,
\[
\left\Vert \bar{x}\right\Vert 
\approx
\left\Vert \pi(\bar{x})\right\Vert ,\quad\left\Vert \bar{x}^{*}\bar{x}\right\Vert
\approx
\left\Vert \pi(\bar{x})^{*}\pi(\bar{x})\right\Vert .
\]
The equality
$\left\Vert \bar{x}\right\Vert ^{2}
=\left\Vert \bar{x}^{*}\bar{x}\right\Vert $
upstairs now passes downstairs, so $A/I$ is a $C^{*}$-algebra.

We have an eye on potential applications in noncommutative real algebraic
geometry \cite{HeltonPositivstellensatz,lasserrePutinarSemiAlgebraic}.
What essential differences are there between real algebraic geometry
and noncommutative real algebraic geometry? Occam would cut between
these fields with the equation 
\[
x^{n}=0.
\]
Could we just exclude this equation? Probably not. A search of the
physics literature finds that polynomials in nilpotent variables are
gaining popularity. Two examples to see are \cite{efetov1999supersymmetry}
in condensed matter physics, and \cite{mandilara2006quantum} in quantum
information.

Focusing back on lifting problems, we recall what is known about lifting
nilpotents up from general $C^{*}$-algebra quotients. Akemann and
Pedersen \cite{AkePederIdealPerturb} showed the relation $x^{2}=0$
lifts, and Olsen and Pedersen \cite{OlsenPedLifting} did the same
for $x^{n}=0.$ Akemann and Pedersen \cite{AkePederIdealPerturb}
also showed that if $x^{n-1}\neq0$ for some $x\in A/I$ then one
can find a lift $X$ of $x$ with 
\[
\left\Vert X^{j}\right\Vert 
= \left\Vert x^{j}\right\Vert ,\quad(j=1,\ldots,n-1).
\]
If $x^{n}=0$ and $x^{n-1}\neq0$ then we would like to combine these
results, lifting both the nilpotent condition and the $n-1$ norm
conditions. It was not until recently, in \cite{Shulman_nilpotents},
that it was shown one could lift just the two relations
\[
\left\Vert x\right\Vert \leq C,\quad x^{n}=0
\]
for $C>0.$ 

Here we show how to lift a nilpotent and all these norm conditions,
so show the liftablity of the set of relations
\[
\left\Vert x^{j}\right\Vert \leq C_{j},\quad j=1,\ldots,n,
\]
even if $C_{n}=0.$ In the particular case where the quotient is the
Calkin algebra and the lifting is to $\mathbb{B}(\mathbb{H}),$ we
proved this using different methods in \cite{LoringShulmanSpRadius},
as a partial answer to Olsen's question
\cite{OlsenCptPeturbOfOps}. 

More generally, we consider soft homogeneous relations (as defined
below) together with relations $x_{j}^{N_{j}}=0.$ In one variable,
another example of such a collection of liftable relations is
\[
\left\Vert x\right\Vert 
\leq C_{1},\quad\left\Vert x^{*}x-x^{2}\right\Vert 
\leq C_{2},\quad x^{3}
=0.
\]
In two variables, we have such curiosities as
\[
\left\Vert x\right\Vert \leq1,\quad\left\Vert y\right\Vert \leq 1,
\quad
x^{3}=0,\quad y^{3}=0,
\quad
\left\Vert x-y\right\Vert \leq\epsilon
\]
which we can now lift.

Given a $*$-polynomial in $x_{1},\dots,x_{n}$ we have the usual
relation $p(x_{1},\ldots,x_{n})=0,$ where now the $x_{j}$ are in
a $C^{*}$-algebra. In part due to the shortage of semiprojective
$C^{*}$-algebras, Blackadar \cite{Blackadar-shape-theory} suggested
that we would do well to study the relation 
$\left\Vert p(x_{1},\ldots,x_{n})\right\Vert \leq C$
for some $C>0.$ Following Exel's lead \cite{ExelSoftTorusI}, we
call this a \emph{soft polynomial relation}. Softened relations come
up naturally when trying to classifying $C^{*}$-algebras that are
inductive limits, as in \cite{elliott1996AT}, when exact relations
in the limit lead only to inexact relations in a building block in
the inductive system.

The homogeneity we need is only that there be a subset (such
as) $x_{1},\ldots,x_{r}$ of the variables and an integer $d\geq1$
so that every monomial in $p$ contains exactly $d$ factors from
$x_{1},x_{1}^{*}\dots,x_{r},x_{r}^{*}.$ 

The relation $x^{N}=0$ is ``more liftable'' than most liftable relations
in that it can be added to many liftable sets while maintaining liftability.
Other relations that behave this way are $x^{*}=x$ and $x\geq0.$
We explored semiagebraic sets (as NC topological spaces) in positive
and Hermitian variables in \cite{LoringShulmanSemialg}. 

There are still other relations that are ``more liftable'' in this
sense. We consider in this note $xyx^{*}=0$ and $xy=0.$ This is
not the end of the story, as we might have a rare case of too little
theory and too many examples.

We use many technical results from our previous work
\cite{LoringShulmanSemialg}. We also have use for the Kasparov
Technical Theorem. Indeed we use only a simplified version, but the
fully technical version can probably be used to find even more lifting
theorems in this realm. For a reference, a choice could be made from
\cite{ELP-pushBusby,Loring-lifting-perturbing,OlsenPedLifting}.

We will use the notation $a\ll b$ to mean $b$ acts like unit on
$a,$ i.e. $ab=a=ba.$

A trick we use repeatedly is to replace a single element $c$ so that
$0\leq c\leq1$ and 
\[
x_{j}c=x_{j},\quad cy_{k}=0
\]
for some sequences $x_{j}$ and $y_{k}$ with two elements $a$ and
$b$ with
\begin{equation}
0\leq a\ll b\leq1
\label{eq:KTT1}
\end{equation}
and
\begin{equation}
x_{j}a=x_{j},\quad by_{k}=0.
\label{eq:KTT2}
\end{equation}
These are found with basic functional calculus. The simplified version
of Kasparov's technical theorem we need can be stated as follows:
for $x_{1},x_{2},\cdots$ and $y_{1},y_{2},\cdots$ in a corona algebra
$C(A)=M(A)/A$ (for $A$ $\sigma$-unital) with $x_{j}y_{k}=0$ for
all $j$ and $k,$ there are elements $a$ and $b$ in $C(A)$ satisfying
(\ref{eq:KTT1}) and (\ref{eq:KTT2}).

\section{Lifting Nilpotents while Preserving Various Norms}

\begin{lem}
\label{lem:nil-lift-trick} Suppose $A$ is $\sigma$-unital $C^{*}$-algebra,
$n$ is at least $2,$ and consider the quotient
map $\pi:M(A)\rightarrow M(A)/A.$
\begin{enumerate}
\item If $x$ is an element of $M(A)$ so that $\pi\left(x^{n}\right)=0$
then there are elements $p_{1},\ldots,p_{n-1}$ and
$q_{1},\ldots,q_{n-1}$ of $M(A)$ with 
\[
j>k\implies p_{j}q_{k}=0
\]
and
\[
\pi\left(\sum_{j=1}^{n-1}q_{j}xp_{j}\right)=\pi(x).
\]
\item If $\pi(\tilde{x})=\pi(x)$ and we set
\[
\bar{x}=\sum_{j=1}^{n-1}q_{j}\tilde{x}p_{j},
\]
then $\pi(\bar{x})=\pi(x)$ and $\bar{x}^{n}=0.$
\end{enumerate}
\end{lem}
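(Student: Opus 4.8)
My plan is to establish part (1)---the construction of the staircase data $p_1,\dots,p_{n-1}$, $q_1,\dots,q_{n-1}$---and then obtain part (2) from it by a short multiplication. Granting (1), part (2) is essentially formal. Since $\pi$ is a $*$-homomorphism and $\pi(\tilde x)=\pi(x)$,
\[
\pi(\bar x)=\sum_{j=1}^{n-1}\pi(q_j)\pi(\tilde x)\pi(p_j)=\sum_{j=1}^{n-1}\pi(q_j)\pi(x)\pi(p_j)=\pi\Bigl(\sum_{j=1}^{n-1}q_jxp_j\Bigr)=\pi(x).
\]
For $\bar x^{\,n}=0$ I would expand
\[
\bar x^{\,n}=\sum_{j_1,\dots,j_n}q_{j_1}\,\tilde x\,(p_{j_1}q_{j_2})\,\tilde x\,(p_{j_2}q_{j_3})\,\tilde x\cdots(p_{j_{n-1}}q_{j_n})\,\tilde x\,p_{j_n}
\]
and use that the construction in (1) in fact yields $p_jq_k=0$ whenever $j\ge k$---this strengthening of the relation explicitly recorded in (1) is precisely what forces nilpotence. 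Then a summand can be nonzero only if $j_1<j_2<\dots<j_n$, and no strictly increasing chain of length $n$ fits inside $\{1,\dots,n-1\}$; hence $\bar x^{\,n}=0$.

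The content is therefore part (1). Here $\pi(x^n)=0$ says that $\pi(x)$ is nilpotent of order at most $n$ in the corona $C(A)=M(A)/A$, and the aim is to realize the associated Jordan flag: an $n$-step staircase along which $\pi(x)$ moves each step one level down with no surviving ``diagonal'' part. Concretely, I would produce positive contractions in $M(A)$ that serve, modulo $A$, as the successive spectral bands of such a decomposition, and then take $p_j$ and $q_j$ to be functional-calculus cutdowns associated with (essentially) band $j$ and band $j-1$, the bands separated so that $p_jq_k=0$ holds \emph{exactly} in $M(A)$ for $j\ge k$, while $\sum_{j=1}^{n-1}q_jxp_j$ still recovers all of $x$ modulo $A$---the last point using that the nilpotent $\pi(x)$ has no diagonal component relative to the flag. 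The tools are the ones advertised in the introduction: the elementary replacement $c\rightsquigarrow(a\ll b)$ coming from functional calculus, which turns an approximate separation of supports into the exact algebraic identities $p_jq_k=0$, and the simplified Kasparov Technical Theorem, used to locate inside $C(A)$ interpolating positive elements with the required absorption and annihilation against the finitely many relevant powers of $\pi(x)$ and their adjoints.

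I expect the genuine obstacle to be exactly this construction: securing the orthogonality $p_jq_k=0$ \emph{exactly} in $M(A)$, rather than merely modulo $A$, while keeping $\pi(\sum_j q_jxp_j)=\pi(x)$. This calls for an inductive bookkeeping in which the flag is peeled off one level at a time and, at each stage, the Kasparov theorem is applied to the finite list of constraints already accumulated, so that the newly introduced pair $p_j,q_j$ remains exactly orthogonal, in the sense of the products $p_jq_k$, to everything produced earlier. Once this staircase is in place, part (1) is immediate and part (2) is the expansion displayed above.
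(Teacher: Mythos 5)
The paper does not actually prove this lemma: its ``proof'' is a citation of Lemmas 1.1, 8.1.3, 12.1.3 and 12.1.4 of \cite{Loring-lifting-perturbing}, so there is no in-paper argument to match yours against. Your part (2) is correct as far as it goes, and you have put your finger on a genuine point: with only the stated implication $j>k\implies p_jq_k=0$, the surviving terms in the expansion of $\bar x^{\,n}$ are those with $j_1\le j_2\le\cdots\le j_n$, and such chains do exist in $\{1,\dots,n-1\}$ (for instance all indices equal), so nilpotence does not follow formally from part (1) as printed. One needs $p_jq_k=0$ for $j\ge k$; already for $n=2$ the one relation required is $p_1q_1=0$. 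That is indeed what the cited construction delivers, so your ``strengthening'' is not optional --- it is the hypothesis that should have been recorded, and you are right to flag it.

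The gap is part (1), which you outline but do not prove. You describe what the $p_j,q_j$ should be (functional-calculus cutdowns adapted to the flag of $\pi(x)$), you correctly name the obstacle (exact orthogonality in $M(A)$ while preserving $\pi(\sum_j q_jxp_j)=\pi(x)$), and you name the tools (KTT in the corona, then lifting $\ll$-pairs and orthogonality relations from $C(A)$ to $M(A)$), but you never construct the elements or verify either identity. In particular the reproducing identity is not automatic from ``the nilpotent has no diagonal component'': the standard route is to use iterated applications of KTT to the vanishing products $\pi(x)^{j}\pi(x)^{n-j}=0$ to produce $e_1\ll e_2\ll\cdots\ll e_{n-1}$ in $C(A)$ with $(1-e_j)\pi(x)e_{j+1}=0$, define the $p_j,q_j$ from the $e_j$, and then carry out an explicit telescoping computation to recover $\pi(x)$ as $\sum_j q_j\pi(x)p_j$; after that one still has to lift the finitely many exact relations among the $p_j,q_j$ from $C(A)$ to $M(A)$. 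That construction and computation are the entire content of the lemma and are exactly what the four cited lemmas of \cite{Loring-lifting-perturbing} supply; as written, your argument for part (1) defers precisely the step that constitutes the lemma.
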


\begin{proof}
This is the essential framework that assists the lifting of nilpotents,
going back to \cite{OlsenPedLifting}. Other than a change of notation,
this is an amalgam of Lemmas 1.1, 8.1.3, 12.1.3 and 12.1.4 of
\cite{Loring-lifting-perturbing}.
\end{proof}

\begin{thm}
If $x$ is an element of a $C^{*}$-algebra $A,$ and $I$ is an ideal
and $\pi:A\rightarrow A/I$ is the quotient map, then for any natural
number $N,$ there is an element $\bar{x}$ in $A$ so 
that $\pi(\bar{x})=\pi(x)$
and
\[
\left\Vert \bar{x}^{n}\right\Vert 
= \left\Vert \pi\left(x^{n}\right)\right\Vert,
\quad (n=1,\ldots,N).
\]
\end{thm}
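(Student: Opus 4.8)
The plan is to reduce to a corona quotient and then graft the norm-preserving lift of Akemann and Pedersen onto the nilpotent-lifting framework of Lemma~\ref{lem:nil-lift-trick}. Write $c_{n}=\left\Vert\pi(x^{n})\right\Vert$. Every lift $y$ of $\pi(x)$ already satisfies $\left\Vert y^{n}\right\Vert\ge\left\Vert\pi(y^{n})\right\Vert=c_{n}$, so the whole task is to stop the powers of the lift from overshooting. First I would pass to a separable $C^{*}$-subalgebra of $A$ containing $x$, which changes nothing since a subquotient $B/(B\cap I)$ embeds isometrically into $A/I$; then, by the standard reduction of lifting problems to the corona setting (see \cite{Loring-lifting-perturbing}), I would assume we are in the situation of Lemma~\ref{lem:nil-lift-trick}, i.e.\ $I$ is $\sigma$-unital, $x\in M(I)$, and $\pi\colon M(I)\to M(I)/I$. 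Finally, if $\pi(x)=0$ then $\bar x=0$ works, so assume $\pi(x)\ne0$ and put $r=\max\{\,n\le N:\pi(x)^{n}\ne0\,\}$, so that $c_{1},\dots,c_{r}>0$ while $c_{r+1}=\dots=c_{N}=0$.

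If $r=N$, the theorem is immediate from the result of Akemann and Pedersen recalled in the introduction (\cite{AkePederIdealPerturb}): since $\pi(x)^{N}\ne0$ it applies with exponent $N+1$ and produces a lift $X$ of $\pi(x)$ with $\left\Vert X^{j}\right\Vert=c_{j}$ for $j=1,\dots,N$, and we take $\bar x=X$. Now suppose $r<N$. Then the lift we seek is forced to be nilpotent of order $r+1$, and conversely once $\bar x^{r+1}=0$ we automatically have $\left\Vert\bar x^{j}\right\Vert=0=c_{j}$ for every $j>r$; so it is enough to produce $\bar x\in M(I)$ with $\pi(\bar x)=\pi(x)$, $\bar x^{r+1}=0$, and $\left\Vert\bar x^{j}\right\Vert=c_{j}$ for $j=1,\dots,r$. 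Applying Akemann and Pedersen with exponent $r+1$ (legitimate because $\pi(x)^{r}\ne0$) gives a lift $X$ with $\left\Vert X^{j}\right\Vert=c_{j}$ for $j\le r$; applying Lemma~\ref{lem:nil-lift-trick} to $x$ with $n=r+1$ and forming $\bar x=\sum_{j=1}^{r}q_{j}Xp_{j}$ as in its part (2) then gives $\pi(\bar x)=\pi(x)$ and $\bar x^{r+1}=0$.

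The main obstacle is that Lemma~\ref{lem:nil-lift-trick} as stated says nothing about $\left\Vert\bar x^{j}\right\Vert$ for $j\le r$: the triangle inequality gives only the useless bound $r\,\left\Vert X\right\Vert$, whereas we need $\left\Vert\bar x^{j}\right\Vert\le c_{j}$. To get this I would unpack the construction behind Lemma~\ref{lem:nil-lift-trick}. Its $p_{j},q_{j}$ are manufactured from a single nested positive family $e_{1}\ll e_{2}\ll\cdots$ of ``units mod $I$'', and one can arrange in addition that the row $(q_{1},\dots,q_{r})$ and the column $(p_{1},\dots,p_{r})^{t}$ are contractions, so that $Y\mapsto\sum_{j}q_{j}Yp_{j}$ is a complete contraction, and---the real point---that for each $k\le r$ every nonzero word $q_{j_{1}}Xp_{j_{1}}q_{j_{2}}X\cdots Xp_{j_{k}}$ in the expansion of $\bar x^{k}$ collapses, via the $\ll$-relations among the $e_{i}$, to a single compression $V^{*}X^{k}W$ of $X^{k}$ by contractions $V$ and $W$. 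Summing the finitely many such words, whose outer pieces can be kept essentially orthogonal, would then keep $\left\Vert\bar x^{k}\right\Vert\le\left\Vert X^{k}\right\Vert=c_{k}$. An equivalent and perhaps cleaner route is to run the Akemann--Pedersen iteration and the nilpotent iteration against one and the same nested family $\{e_{i}\}$, so that the output is at once nilpotent of order $r+1$ and norm-preserving on the powers $1,\dots,r$; a technical lemma of this flavour is the sort of ingredient imported from \cite{LoringShulmanSemialg}. Together with the automatic lower bounds $\left\Vert\bar x^{j}\right\Vert\ge c_{j}$, this gives the required equalities and completes the proof.
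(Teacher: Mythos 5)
You have correctly isolated the crux --- that Lemma~\ref{lem:nil-lift-trick} by itself says nothing about $\left\Vert \bar{x}^{j}\right\Vert$ for $j\leq r$ --- but your proposed repair does not work, and it is not what the paper does. A nonzero word in the expansion of $\bar{x}^{k}=\left(\sum_{j}q_{j}Xp_{j}\right)^{k}$ has the form $q_{j_{1}}X\left(p_{j_{1}}q_{j_{2}}\right)X\left(p_{j_{2}}q_{j_{3}}\right)\cdots X p_{j_{k}}$: the factors $p_{j_{i}}q_{j_{i+1}}$ sit \emph{between} the copies of $X$ and do not act as units on $X$, so the word is not a compression $V^{*}X^{k}W$ and its norm is not controlled by $\left\Vert X^{k}\right\Vert$ (already for $k=2$, $\left\Vert aXbXc\right\Vert$ can greatly exceed $\left\Vert X^{2}\right\Vert$ for contractions $a,b,c$; e.g.\ $X^{2}$ can vanish while $XbX$ does not). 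Even the $k=1$ case of your claim --- that the nilpotent-making map can be arranged to be contractive so that $\left\Vert \bar{x}\right\Vert\leq\left\Vert X\right\Vert$ --- is essentially the main theorem of \cite{Shulman_nilpotents}, which the introduction flags as a recent and nontrivial result; it cannot be waved through as a normalization of the $p_{j},q_{j}$. Your fallback suggestion of running the Akemann--Pedersen iteration and the nilpotent iteration against a common nested family is a restatement of the difficulty rather than a solution.

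The paper resolves this by reversing the order of operations. After fixing the $p_{j},q_{j}$ from Lemma~\ref{lem:nil-lift-trick}, it regards each condition $\left\Vert \left(\sum_{j}q_{j}\tilde{x}p_{j}\right)^{n}\right\Vert \leq C_{n}$ as a soft NC polynomial relation in the single variable $\tilde{x}$ (with multiplier coefficients) that is \emph{homogeneous} of degree $n$ in $\tilde{x}$, observes that these relations hold modulo the ideal because $\pi\left(\sum_{j}q_{j}xp_{j}\right)=\pi(x)$, and invokes Theorem~3.2 of \cite{LoringShulmanSemialg} to produce a lift $\hat{x}$ for which the composed expressions already satisfy the norm bounds. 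Only then is $\bar{x}=\sum_{j}q_{j}\hat{x}p_{j}$ formed, so nilpotence and the norm bounds are achieved simultaneously; the lower bounds $\left\Vert \bar{x}^{n}\right\Vert \geq\left\Vert \pi(x^{n})\right\Vert$ are automatic, as you note. In short, the homogeneous-lifting theorem is applied to the polynomials $\left(\sum_{j}q_{j}\tilde{x}p_{j}\right)^{n}$, not to $\tilde{x}^{n}$ followed by a post-composition whose norm behaviour is exactly the open issue.
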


\begin{proof}
If $\pi\left(x^{N}\right)\neq0,$ then this is the first statement
in Theorem 3.8 of \cite{AkePederIdealPerturb}.

Assume then that $\pi\left(x^{N}\right)=0.$ Standard reductions
(Theorem~10.1.9 of \cite{Loring-lifting-perturbing}) allow us to
assume $A=M(E)$ and $I=E$ for some separable $C^{*}$-algebra $E.$
The first part of Lemma~\ref{lem:nil-lift-trick} provides elements
$p_{1},\ldots,p_{N-1}$ and $q_{1},\ldots,q_{N-1}$ in $M(E)$ with 
\[
j>k\implies p_{j}q_{k}=0
\]
and
\[
\pi\left(\sum_{j=1}^{N-1}q_{j}xp_{j}\right)=\pi(x).
\]
Let $C_{n}=\left\Vert \pi\left(x^{n}\right)\right\Vert .$
Each norm condition 
\[
\left\Vert \left(\sum_{j=1}^{N-1}q_{j}\tilde{x}p_{j}\right)^{n}\right\Vert 
\leq C_{n}
\quad(n=1,\ldots,N-1)
\]
is a norm-restriction of a NC polynomial that is homogeneous in $\tilde{x}.$
We can apply Theorem~3.2 of \cite{LoringShulmanSemialg} to find
$\hat{x}$ in $M(E)$ with
$\pi\left(\hat{x}\right)=\pi\left(\tilde{x}\right)$
and
\[
\left\Vert \left(\sum_{j=1}^{N-1}q_{j}\hat{x}p_{j}\right)^{n}\right\Vert \leq C_{n}\quad(n=1,\ldots,N-1).
\]
Since $\pi\left(\hat{x}\right)=\pi\left(x\right)$ we may apply the
second part of Lemma~\ref{lem:nil-lift-trick} to conclude that
\[
\bar{x}=\sum_{j=1}^{N-1}q_{j}\hat{x}p_{j}
\]
is a lift of $\pi(x),$ is nilpotent of order $N,$ and
\[
\left\Vert \bar{x}^{n}\right\Vert \leq C_{n}=\left\Vert \pi\left(x^{n}\right)\right\Vert 
\]
for $n=1,\ldots,N-1.$
\end{proof}

There was nothing special about the homogeneous $*$-polynomials $x^{n},$
and we can deal with more than one nilpotent variable $x$ at a time.
We say a $*$-polynomial is \emph{homogeneous of degree $r$ for some
subset} $S$ of the variables when the total number of times either
$x$ or $x^{*}$ for $x\in S$ appears in each monomial is $r.$ 
Staying consistent with the notation in \cite{LoringShulmanSemialg},
we use 
\[
p\left(\mathbf{x},\mathbf{y}\right) 
= p\left(x_{1},\ldots x_{r},y_{1},y_{2},\ldots\right)
\]
as so keep to the left the variables in subset where there is homogeneity.

\begin{thm}
Suppose $p_{1},\ldots,p_{J}$ are NC $*$-polynomials in infinitely
many variables that are homogeneous in the set of the first $r$ variables,
each with degree of homogeneity $d_{j}$ at least one. Suppose $C_{j}>0$
are real constants and $N_{k}\geq2$ are integer constants, $k=1,\ldots,r.$
For every $C^{*}$-algebra $A$ and $I\vartriangleleft A$ an ideal,
given $x_{1},\ldots,x_{r}$ and $y_{1},y_{2},\ldots$ in $A$ with
\[
\left(\pi\left(x_{k}\right)\right)^{N_{k}}=0
\]
and
\[
\left\Vert p_{j}\left(\pi\left(\mathbf{x},\mathbf{y}\right)\right)\right\Vert \leq C_{j},
\]
there are $z_{1},\ldots,z_{r}$ in $A$ with  $\pi\left(\mathbf{z}\right)=\pi(\mathbf{x})$
and 
\[
 z_{k} ^{N_{k}}=0
\]
 and
\[
\left\Vert p_{j}\left(\mathbf{z},\mathbf{y}\right)\right\Vert \leq C_{j}.
\]
\end{thm}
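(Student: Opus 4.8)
The plan is to mimic the proof of the single-variable theorem that was just given, iterating the nilpotent-lifting trick once for each of the $r$ nilpotent variables and invoking the soft-homogeneous lifting theorem (Theorem 3.2 of \cite{LoringShulmanSemialg}) in the middle. First I would perform the standard reductions: replace $A$ by $M(E)$ and $I$ by $E$ for a separable $C^{*}$-algebra $E$, so that Lemma~\ref{lem:nil-lift-trick} and the Kasparov-type machinery are available. I would also handle bookkeeping on the $y_{k}$: these variables are never altered, so they just ride along through every step.

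Next, I would set up the substitution for all $r$ nilpotent variables at once. For each $k=1,\ldots,r$, the first part of Lemma~\ref{lem:nil-lift-trick}, applied to $x_{k}$ and $N_{k}$, gives elements $p^{(k)}_{1},\ldots,p^{(k)}_{N_{k}-1}$ and $q^{(k)}_{1},\ldots,q^{(k)}_{N_{k}-1}$ in $M(E)$ with $j>\ell\implies p^{(k)}_{j}q^{(k)}_{\ell}=0$ and $\pi\bigl(\sum_{j}q^{(k)}_{j}x_{k}p^{(k)}_{j}\bigr)=\pi(x_{k})$. Define the substitution operator $\Phi$ that sends a tuple $(\tilde{x}_{1},\ldots,\tilde{x}_{r})$ to $\bigl(\sum_{j}q^{(1)}_{j}\tilde{x}_{1}p^{(1)}_{j},\ \ldots,\ \sum_{j}q^{(r)}_{j}\tilde{x}_{r}p^{(r)}_{j}\bigr)$. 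The key observation is that substituting $\Phi(\tilde{\mathbf{x}})$ into any $*$-polynomial $p_{j}$ that is homogeneous of degree $d_{j}$ in the first $r$ variables produces a new $*$-polynomial in the $\tilde{x}_{k}$ (with the $q^{(k)}_{j},p^{(k)}_{j}$ and the $y$'s as extra coefficients) that is still homogeneous of degree $d_{j}$ in the $\tilde{\mathbf{x}}$ block: each occurrence of $x_{k}$ or $x_{k}^{*}$ is replaced by an expression linear in $\tilde{x}_{k}$ or $\tilde{x}_{k}^{*}$. Hence the constraints $\bigl\Vert p_{j}(\Phi(\tilde{\mathbf{x}}),\mathbf{y})\bigr\Vert\le C_{j}$ are soft homogeneous relations in the variables $\tilde{\mathbf{x}},\mathbf{y}$. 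Since $\pi(\Phi(\mathbf{x}))=\pi(\mathbf{x})$, the point $\pi(\tilde{\mathbf{x}})=\pi(\mathbf{x})$ satisfies these relations downstairs, so Theorem~3.2 of \cite{LoringShulmanSemialg} supplies $\hat{\mathbf{x}}$ in $M(E)$ with $\pi(\hat{\mathbf{x}})=\pi(\mathbf{x})$ and $\bigl\Vert p_{j}(\Phi(\hat{\mathbf{x}}),\mathbf{y})\bigr\Vert\le C_{j}$ for all $j$.

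Finally, I would set $z_{k}=\sum_{j}q^{(k)}_{j}\hat{x}_{k}p^{(k)}_{j}$, i.e. $\mathbf{z}=\Phi(\hat{\mathbf{x}})$. Since $\pi(\hat{x}_{k})=\pi(x_{k})$, the second part of Lemma~\ref{lem:nil-lift-trick} applies variable by variable and gives both $\pi(z_{k})=\pi(x_{k})$ and $z_{k}^{N_{k}}=0$. The norm conditions $\Vert p_{j}(\mathbf{z},\mathbf{y})\Vert\le C_{j}$ are exactly the ones just arranged. This completes the argument.

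The step I expect to be the main obstacle — or at least the one needing the most care — is verifying cleanly that precomposing with $\Phi$ preserves soft-homogeneity in the precise sense required by Theorem~3.2 of \cite{LoringShulmanSemialg}, and that the hypothesis point is admissible there. One has to be sure the resulting expression genuinely is a (finite) NC $*$-polynomial in the enlarged variable list with the $q^{(k)}_{j}$, $p^{(k)}_{j}$ treated as coefficient variables fixed in $M(E)$, and that the degree count in the first $r$ slots is unchanged; the ideals-versus-multiplier-algebra reduction and the fact that the $y$'s are untouched are routine by comparison. A minor point worth stating is that we do not need $C_{n}=0$ as a separate endpoint case here because the $N_{k}$-th power relations are handled exactly, not as norm bounds, by Lemma~\ref{lem:nil-lift-trick}.
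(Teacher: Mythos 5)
Your proposal is correct and follows the paper's own argument essentially verbatim: apply Lemma~\ref{lem:nil-lift-trick} to each $x_{k}$ to build the substitution $\tilde{x}_{k}\mapsto\sum_{b}q_{k,b}\tilde{x}_{k}p_{k,b}$, observe that precomposing the $p_{j}$ with this substitution preserves homogeneity in the first $r$ variables, and invoke Theorem~3.2 of \cite{LoringShulmanSemialg} before applying the second part of the lemma to get nilpotency exactly. The point you flag as the main obstacle is precisely the one the paper also relies on (and states without further elaboration), so no gap.
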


\begin{proof}
Again we use standard reductions to assume $A=M(E)$ and $I=E$ for
some separable $C^{*}$-algebra $E.$ Now we apply Lemma~\ref{lem:nil-lift-trick}
to each $x_{k}$ and find $p_{k,1},\ldots,p_{k,N_{k}-1}$ and
$q_{k,1},\ldots,q_{k,N_{k}-1}$ in $M(E)$ with 
\[
b>c\implies p_{k,b}q_{k,c}=0
\]
and
\[
\pi\left(\sum_{b=1}^{N_{k}-1}q_{k,b}x_{k}p_{k,b}\right)=\pi\left(x_{k}\right).
\]
We know that any $\tilde{\mathbf{x}}$
we take with
$\pi\left(\tilde{\mathbf{x}}\right)=\pi\left(\mathbf{x}\right)$
will give us
\[
\pi\left(\sum_{b=1}^{N_{k}-1}q_{k,b}\tilde{x}_{k}p_{k,b}\right)
=\pi\left(x_{k}\right)
\]
and
\[
\left(\sum_{b=1}^{N_{k}-1}q_{k,b}\tilde{x}_{k}p_{k,b}\right)^{N_{k}}=0,
\]
so we need only fix the relations
\[
\left\Vert p_{j}\left(\sum_{b=1}^{N_{1}-1}q_{1,b}\tilde{x}_{1}p_{1,b},\ldots\sum_{b=1}^{N_{r}-1}q_{r,b}\tilde{x}_{r}p_{r,b},\mathbf{y}\right)\right\Vert \leq C_{j}.
\]
These are homogeneous in
$\left\{ \tilde{x}_{1},\ldots,\tilde{x}_{r}\right\} $ so we
are done, by Theorem~3.2 of \cite{LoringShulmanSemialg}.
\end{proof}

We could add various relations on the variables $y_{1},y_{2},\dots,$
and include in the $p_{j}$ $*$-polynomials, in various ways that
ensure that there is an associated universal $C^{*}$-algebra which
is then projective. For example, we could zero them out the extra
variables (so just omit them) and impose a soft relation know for
imply all the $x_{j}$ are contractions. Let us give one specific
class of examples.

\begin{example}
We have projectivity for the universal $C^{*}$-algebra on $x_{1},\dots,x_{n}$
subject to the relations
\[
x^{N_{k}}=0,\ \left\Vert \sum x_kx_k^* \right\Vert \leq1,\ \left\Vert p_{j}\left(x_{1},\dots,x_{n}\right)\right\Vert \leq C_{j}
\]
for $C_{j}>0$ and the $p_{j}$ all NC $*$-polynomials that are homogeneous
in $x_{1},\dots,x_{n}.$
\end{example}

\section{The Relation $xyx^{*}=0.$}

We now explore setting $xyx^{*}$ to zero. This word is unshrinkable,
in the sense of \cite{tapperEmbedding}. We show that many sets of
relations involving $xyx^{*}=0$ are liftable. One example, chosen
essentially at random, is the set consisting of the relations
\[
\left\Vert x\right\Vert \leq1,\quad\left\Vert y\right\Vert \leq1,
\quad
\left\Vert xy+yx\right\Vert \leq1,\quad xyx^{*}=0.
\]

\begin{lem}
\label{lem:conditioningxyxstar} 
Suppose $A$ is $\sigma$-unital and $C(A)=M(A)/A.$ If $x$ and $y$
are elements of $M(A)$ so that $xyx^{*}=0,$ then there are elements 
\[
0\leq e\ll f\ll g\le1
\]
so that
\[
x(1-g)=x
\]
and
\[
ey+(1-e)yf=y.
\]
\end{lem}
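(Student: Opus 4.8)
**Proof proposal for Lemma 2.5 (conditioning $xyx^*=0$).**

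The plan is to manufacture the three positive elements $e \ll f \ll g \le 1$ by a single application of the Kasparov-type technical lemma quoted in the introduction, followed by functional calculus to spread them out into a chain. First I would record the key orthogonality consequence of $xyx^* = 0$: since $xyx^*=0$ implies $(y^*x^*)(xy) = y^*(x^*x)y$ need not vanish, the clean statement is that $x^*$ maps the range of $xy$ to zero, i.e. $x^*\cdot(xy)=x^*x y$, which is not obviously zero; so instead I would work with the genuinely orthogonal pair coming from the hypothesis, namely the sequences $\{x\}$ (on the left) and $\{yx^*\}$: indeed $x\cdot(yx^*) = (xyx^*) = 0$. Better still, to get the asymmetric condition $ey + (1-e)yf = y$, I would feed the technical lemma the left family consisting of $x$ together with (scalar multiples of) $y$, and the right family consisting of $yx^*$; checking $x(yx^*)=0$ is immediate, and $y(yx^*)$ need not vanish, so I would need to be more careful about exactly which elements go into the two lists.

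The cleaner route, which I expect is what the authors take, is: apply the technical lemma to obtain a single pair $0 \le a \ll b \le 1$ with $xa = x$ and $b(yx^*)=0$ — equivalently $x a = x$ and $(yx^*)(1-b) = yx^*$, hence $yx^* = yx^*(1-b)$ and so $x(1-b)^{1/2}\cdots$ — wait, one wants this on the correct side; since $x(yx^*)=0$, the technical lemma (with the left sequence being $\{x\}$ and the right sequence being $\{x y\}$, using $x\cdot(xy)$... no) must be set up so that the element that acts as a unit on $x$ from the right is orthogonal to $x$ from the left in the way the later conditioning needs. I would then set $g = h(b)$, $f = h_1(b)$, $e = h_2(b)$ for continuous functions $0\le h_2 \le h_1 \le h$ on $[0,1]$ chosen so that $h_2 h_1 = h_2$, $h_1 h = h_1$ (giving $e \ll f \ll g$), with $h \equiv 1$ on a neighborhood of $\mathrm{sp}(b)\cap(0,1]$ away from $0$ and $h(0)=0$, arranged so that $x(1-g)=x$ follows from $xa=x$ and the relation $a \ll b$. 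The condition $ey + (1-e)yf = y$ is the one requiring thought: writing $y = ey + (1-e)y$, it suffices to show $(1-e)y = (1-e)yf$, i.e. $(1-e)y(1-f) = 0$. Since $f = h_1(b)$ and $1-f$ is supported near where $b$ is small, and $x^*x$ acts like a unit on the relevant piece of $y$ via $xyx^*=0$... the orthogonality $x(yx^*)=0$ must be translated, via the element $b$ produced by the technical lemma, into exactly the statement $(1-e)y(1-f)=0$.

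Concretely, here is the step I would carry out with care. Apply the Kasparov technical lemma in $C(A)$ to the (countable, here finite) families: left family $x_1 = x$; right family $y_1 = xy$. We have $x_1 y_1 = x(xy)$, which is not what we want either — so the correct input is left family $\{yx^*\}$ hmm. Let me instead input left family $\{x\}$ and right family $\{yx^*\}$ and note $x\cdot yx^* = 0$: this yields $0\le a\ll b\le 1$ with $xa = x$ and $b\,(yx^*) = yx^*$. Then $yx^* = b yx^*$ forces, on taking adjoints, $x y^* = x y^* b$, and applying functional calculus in $b$ to interpolate gives the chain. I'd set $e,f,g$ as functions of $b$ so that: (i) $g$ is $\le 1$, $g$ acts like a unit on $b$ hence $x g^{?}$... and $x(1-g) = x$ because $xa=x$ with $a \ll b$ and $g\equiv 1$ on $\mathrm{sp}(a)\setminus\{0\}$; (ii) for the $y$-condition, decompose $y = ey + (1-e)y$ and show $(1-e)y = (1-e)yf$ using that $b$ (hence $f = h_1(b)$) acts as a unit on $yx^*$, together with a self-adjointness/ range argument: $(1-e)y(1-f)$ has $(1-f)$ killing the part of $y$ where $b$ is nonzero while... this is the crux. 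I expect the main obstacle to be precisely this: arranging the interpolating functions $h, h_1, h_2$ and verifying that the single orthogonality relation $x(yx^*)=0$, after being upgraded by the technical lemma to a relation involving $b$, genuinely yields the two-sided "sandwich" identity $ey + (1-e)yf = y$ rather than merely a one-sided version; everything else is routine functional calculus once the correct input families for the technical lemma and the correct interpolation scheme are pinned down.
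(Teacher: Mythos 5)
Your first step is right: applying the technical theorem to the single orthogonality $x\cdot(yx^{*})=0$ with left family $\{x\}$ and right family $\{yx^{*}\}$ is exactly how the paper begins (though note an orientation slip in your write-up: the theorem as quoted gives $b\,(yx^{*})=0$, not $b\,(yx^{*})=yx^{*}$; the element produced acts as a unit on $x$ and \emph{annihilates} $yx^{*}$). The genuine gap is in what you do next. You hope to manufacture all three of $e\ll f\ll g$ as functions of the single element $b$ by interpolating with functional calculus, and you correctly identify that the crux is the sandwich identity $ey+(1-e)yf=y$, equivalently $(1-e)y(1-f)=0$ --- but you do not close it, and it cannot be closed this way. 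The element $f$ must act as a \emph{right} unit on $(1-e)y$, and the only relation tying $b$ to $y$ is the left annihilation $b(yx^{*})=0$; no amount of functional calculus in $b$ produces an element with the required right-unit property on $(1-e)y$, because nothing in the available relations controls $y$ from the right by functions of $b$.

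The paper's resolution is a \emph{second} application of the technical theorem. Writing $d$ for the element of the first application (so $xd=x$ and $dyx^{*}=0$), one rewrites $xd=x$ as $(1-d)x^{*}=0$ and then feeds the technical theorem the left family $\{dy,\,1-d\}$ and the right family $\{x^{*}\}$: both products vanish, so one obtains $0\le f\ll g\le1$ with $dyf=dy$, $(1-d)f=(1-d)$ and $gx^{*}=0$. Setting $e=1-d$ then gives the chain $e\ll f\ll g$, the identity $x(1-g)=x$ from $gx^{*}=0$, and
\[
ey+(1-e)yf=(1-d)y+dyf=(1-d)y+dy=y .
\]
The key idea you are missing is precisely this second pass: the right-unit condition on the $y$-side is obtained by presenting $dy$ and $1-d$ as \emph{left} factors orthogonal to $x^{*}$, not by interpolating inside the commutative algebra generated by the output of the first pass.
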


\begin{proof}
We apply Kasparov's technical theorem to the product
$x\left(yx^{*}\right)=0$ to find 
\[
0\leq d\le1
\]
 in $C(A)$ with 
\begin{align}
xd & =x,\label{eq:xd}\\
dyx^{*} & =0.\label{eq:dyxstar}
\end{align}
We rewrite (\ref{eq:xd}) as
\begin{equation}
(1-d)x^{*}=0\label{eq:(1-d)xstar}
\end{equation}
and apply Kasparov's technical theorem to (\ref{eq:dyxstar}) and
(\ref{eq:(1-d)xstar}) to find
\[
0\leq f\ll g\leq1
\]
 in $C(E)$ with 
 \begin{align}
(1-d)f & =(1-d)\nonumber \\
dyf & =dy\label{eq:dyf}\\
gx^{*} & =0.\nonumber 
\end{align}
Thus we have $xg=0$ and
\[
0\leq1-d\ll f\ll g\leq1.
\]
We are done, with $e=1-d,$ since (\ref{eq:dyf}) gives us
\[
ey+(1-e)yf=(1-d)y+dyf=y.
\]
\end{proof}

\begin{lem}
\label{lem:xyxstar-lift-trick} 
Suppose $A$ is $\sigma$-unital and
consider the quotient map $\pi:M(A)\rightarrow M(A)/A.$
\begin{enumerate}
\item If $x$ and $y$ are elements of $M(A)$ so that $\pi\left(xyx^{*}\right)=0,$
then there are elements $e,$ $f$ and $g$ in $M(A)$ with 
\begin{equation}
0\leq e\ll f\ll g\le1,
\label{eq:e_ll_f_ll_g}
\end{equation}
\[
\pi\left(x(1-g)\right)=\pi(x)
\]
and
\[
\pi\left(ey+(1-e)yf\right)=\pi(y).
\]
\item If $\pi(\tilde{x})=\pi(x)$ and $\pi(\tilde{y})=\pi(y)$ then, if
we set
\[
\bar{x}=\tilde{x}(1-g)
\]
and
\[
\bar{y}=e\tilde{y}+(1-e)\tilde{y}f,
\]
we have $\pi(\bar{x})=\pi(x),$ $\pi(\bar{y})=\pi(y)$ 
and $\bar{x}\bar{y}\bar{x}^{*}=0.$ 
\end{enumerate}
\end{lem}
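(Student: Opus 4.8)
The plan is to obtain~(1) as a ``corona version'' of Lemma~\ref{lem:conditioningxyxstar} followed by an exact lift of the tower it produces, and then to deduce~(2) by a short computation that uses only the relations $0\le e\ll f\ll g\le1$.

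For~(1), observe that $\pi\left(xyx^{*}\right)=0$ says precisely that $\pi(x)\cdot\pi\left(yx^{*}\right)=0$ in the corona algebra $C(A)=M(A)/A$. Since the simplified Kasparov technical theorem recalled in the introduction is available inside $C(A)$, I would run the two applications of it from the proof of Lemma~\ref{lem:conditioningxyxstar}, but now inside $C(A)$ and with $\pi(x),\pi(y)$ playing the roles of $x,y$ --- equivalently, apply Lemma~\ref{lem:conditioningxyxstar} itself to $\pi(x)$ and $\pi(y)$, since its proof uses only the corona form of Kasparov's theorem together with functional calculus. This yields elements $0\le\bar{e}\ll\bar{f}\ll\bar{g}\le1$ of $C(A)$ with
\[
\pi(x)(1-\bar{g})=\pi(x),\qquad\bar{e}\,\pi(y)+(1-\bar{e})\,\pi(y)\,\bar{f}=\pi(y).
\]
It then remains to lift the triple $\left(\bar{e},\bar{f},\bar{g}\right)$ to $(e,f,g)$ in $M(A)$ with $0\le e\ll f\ll g\le1$ holding \emph{on the nose}. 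This is legitimate because a tower of positive contractions linked by $\ll$ generates a commutative universal $C^{*}$-algebra, one readily seen to be isomorphic to $C_{0}\bigl((0,1]\bigr)$, which is projective; so the lift exists against the quotient map $M(A)\rightarrow M(A)/A$. Finally, applying $\pi$ to $x(1-g)$ and to $ey+(1-e)yf$ and using $\pi(e)=\bar{e}$, $\pi(f)=\bar{f}$, $\pi(g)=\bar{g}$ reproduces the two displayed congruences with $e,f,g$ in place of $\bar{e},\bar{f},\bar{g}$, which is~(1).

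For~(2), the two congruences are immediate from~(1): since $\pi(\tilde{x})=\pi(x)$ we get $\pi(\bar{x})=\pi(\tilde{x})\pi(1-g)=\pi(x)(1-\bar{g})=\pi(x)$, and since $\pi(\tilde{y})=\pi(y)$ we get $\pi(\bar{y})=\bar{e}\,\pi(y)+(1-\bar{e})\,\pi(y)\,\bar{f}=\pi(y)$. For the relation $\bar{x}\bar{y}\bar{x}^{*}=0$ I would use only $0\le e\ll f\ll g\le1$: from $ef=e$ and $fg=f$ one gets $eg=efg=ef=e$, hence $(1-g)e=0$; and $f\ll g$ gives $f(1-g)=0$. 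Consequently
\[
\bar{x}\bar{y}\bar{x}^{*}=\tilde{x}(1-g)\bigl(e\tilde{y}+(1-e)\tilde{y}f\bigr)(1-g)\tilde{x}^{*},
\]
and in the expansion the term carrying $e\tilde{y}$ is killed on the left by $(1-g)e=0$ while the term carrying $(1-e)\tilde{y}f$ is killed on the right by $f(1-g)=0$, so $\bar{x}\bar{y}\bar{x}^{*}=0$.

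I expect the only genuinely non-formal step --- everything else being bookkeeping, since the Kasparov-theorem work is already packaged in Lemma~\ref{lem:conditioningxyxstar} --- to be the exact lift of the tower $0\le\bar{e}\ll\bar{f}\ll\bar{g}\le1$ from $C(A)$ to $M(A)$. Part~(2) invokes these relations with no error term, so it is essential that they be lifted on the nose rather than merely modulo $A$, and projectivity of that commutative relation set is exactly what makes this work.
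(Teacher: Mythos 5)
Your proposal is correct and follows essentially the same route as the paper: apply Lemma~\ref{lem:conditioningxyxstar} to $\pi(x)$, $\pi(y)$ in the corona, lift the tower $0\le e\ll f\ll g\le1$ exactly (the paper cites Lemma~1.1.1 of \cite{Loring-lifting-perturbing}, which is precisely the projectivity fact you invoke, and your identification of the universal $C^{*}$-algebra with $C_{0}\bigl((0,1]\bigr)$ is in fact correct), and then kill the two terms of $\bar{x}\bar{y}\bar{x}^{*}$ using $(1-g)e=0$ and $f(1-g)=0$. No gaps.
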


\begin{proof}
In $C(A),$ the product $\pi(x)\pi(y)\pi(x)^{*}$ is zero,
so Lemma~\ref{lem:conditioningxyxstar}
produces $e_{0},$ $f_{0}$ and $g_{0}$ in $C(A)$ with
\[
0\leq e_{0}\ll f_{0}\ll g\le1,
\]
\[
\pi(x)(1-g_{0})=\pi(x)
\]
and
\[
e_{0}\pi(y)+(1-e_{0})\pi(y)f_{0}=\pi(y).
\]
Lemma 1.1.1 of \cite{Loring-lifting-perturbing} tells us there are
lifts $e,$ $f$ and $g$ in $M(A)$ of $e_{0},$ $f_{0}$ and $g_{0}$
satisfying (\ref{eq:e_ll_f_ll_g}). Then
\[
\pi\left(x(1-g)\right)=\pi\left(x\right)\left(1-g_{0}\right)
=\pi\left(x\right)
\]
and
\[
\pi\left(ey+(1-e)yf\right)=e_{0}\pi(y)+(1-e_{0})\pi(y)f_{0}=\pi(y).
\]

As for the second statement,
\[
\pi(\bar{x})=\pi\left(\tilde{x}(1-g)\right)
=\pi(x)(1-g_{0})
=\pi(x),
\]
\[
\pi(\bar{y})=\pi(e\tilde{y}+(1-e)\tilde{y}f)
=e_{0}\pi(y)+(1-e_{0})\pi(y)f_{0}
=\pi(y)
\]
 and
\[
\bar{x}\bar{y}\bar{x}^{*}
=\tilde{x}(1-g)e\tilde{y}(1-g)\tilde{x}^{*}+\tilde{x}(1-g)(1-e)\tilde{y}f(1-g)\tilde{x}^{*}
=0
\]
since $(1-g)e=0$ and $(1-g)f=0.$
\end{proof}

\begin{thm}
\label{thm:xyx_star} 
Suppose $p_{1},\ldots,p_{J}$ are NC $*$-polynomials
in infinitely many variables that are homogeneous in the set of the
first $2r$ variables, each with degree of homogeneity $d_{j}$ at
least one. Suppose $C_{j}>0$ are real constants and $N_{j}\geq2$
are integer constants. For every $C^{*}$-algebra $A$ and 
$I\vartriangleleft A$ an ideal, given $x_{1},\ldots,x_{r}$ and
$y_{1},\ldots,y_{r}$ and $z_{1},z_{2},\ldots$ in $A$ with 
\[
\pi\left(x_{k}\right)\pi\left(y_{k}\right)\pi\left(x_{k}\right)^{*}=0,\quad(k=1,\ldots,r)
\]
and
\[
\left\Vert p_{j}\left(\pi\left(\mathbf{x},\mathbf{y},\mathbf{z}\right)\right)\right\Vert \leq C_{j},\quad(j=1,\ldots,J)
\]
there are $\bar{x}_{1},\ldots,\bar{x}_{r}$ and
$\bar{y}_{1},\ldots,\bar{y}_{r}$ in $A$ with
$\pi\left(\bar{\mathbf{x}}\right)=\pi(\mathbf{x})$ and
$\pi\left(\bar{\mathbf{y}}\right)=\pi(\mathbf{y})$ and 
\[
\bar{x}_{k}\bar{y}_{k}\bar{x}_{k}^{*}=0,\quad(k=1,\ldots,r)
\]
and
\[
\left\Vert p_{j}\left(\bar{\mathbf{x}},\bar{\mathbf{y}},\bar{\mathbf{z}}\right)\right\Vert 
\leq C_{j},
\quad(j=1,\ldots,J).
\]
\end{thm}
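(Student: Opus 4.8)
The plan is to follow the same three-step pattern used in the two preceding theorems: (1) reduce to the multiplier-algebra setting, (2) use the relevant ``lift-trick'' lemma to produce auxiliary elements that force the exact relations no matter which lifts we plug in, and (3) invoke Theorem~3.2 of \cite{LoringShulmanSemialg} on the remaining soft homogeneous relations. Concretely, first apply the standard reductions (Theorem~10.1.9 of \cite{Loring-lifting-perturbing}) to assume $A=M(E)$ and $I=E$ with $E$ separable, so that $\pi:M(E)\to M(E)/E$ is the corona quotient. Since $\pi(x_k)\pi(y_k)\pi(x_k)^*=0$ for each $k$, part (1) of Lemma~\ref{lem:xyxstar-lift-trick} gives, for each $k=1,\dots,r$, elements $e_k,f_k,g_k\in M(E)$ with $0\le e_k\ll f_k\ll g_k\le 1$, $\pi(x_k(1-g_k))=\pi(x_k)$, and $\pi(e_ky_k+(1-e_k)y_kf_k)=\pi(y_k)$.

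The key observation, exactly as in the nilpotent case, is that the conditioning maps are ``stable under relifting.'' That is, for any $\tilde x_k,\tilde y_k\in M(E)$ with $\pi(\tilde x_k)=\pi(x_k)$ and $\pi(\tilde y_k)=\pi(y_k)$, setting
\[
\Phi_k(\tilde x_k)=\tilde x_k(1-g_k),\qquad \Psi_k(\tilde y_k)=e_k\tilde y_k+(1-e_k)\tilde y_kf_k,
\]
part (2) of Lemma~\ref{lem:xyxstar-lift-trick} guarantees that $\pi(\Phi_k(\tilde x_k))=\pi(x_k)$, $\pi(\Psi_k(\tilde y_k))=\pi(y_k)$, and $\Phi_k(\tilde x_k)\,\Psi_k(\tilde y_k)\,\Phi_k(\tilde x_k)^*=0$. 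Thus the exact relations $\bar x_k\bar y_k\bar x_k^*=0$ are automatically satisfied by construction, regardless of the particular lift fed in, and we have reduced the problem to arranging the norm inequalities.

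It remains to handle the soft relations $\|p_j(\mathbf{x},\mathbf{y},\mathbf{z})\|\le C_j$. Observe that $\Phi_k$ is a degree-one NC $*$-polynomial in $\tilde x_k$ (with fixed coefficients $1$ and $1-g_k$) and $\Psi_k$ is a degree-one NC $*$-polynomial in $\tilde y_k$; consequently
\[
p_j\!\left(\Phi_1(\tilde x_1),\dots,\Phi_r(\tilde x_r),\Psi_1(\tilde y_1),\dots,\Psi_r(\tilde y_r),\mathbf{z}\right)
\]
is again an NC $*$-polynomial that is homogeneous of the same degree $d_j$ in the set $\{\tilde x_1,\dots,\tilde x_r,\tilde y_1,\dots,\tilde y_r\}$ (substituting degree-one terms into a degree-$d_j$-homogeneous polynomial preserves homogeneity). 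Since $\pi$ applied to these substituted elements returns $\pi(x_k)$ and $\pi(y_k)$, the hypothesis gives that the $\pi$-image of this new polynomial has norm $\le C_j$. Hence Theorem~3.2 of \cite{LoringShulmanSemialg}, applied with the variables $\tilde x_k,\tilde y_k$ playing the role of the homogeneous block (and $\mathbf{z}$ the free block), produces $\hat x_k,\hat y_k\in M(E)$ lifting $\pi(x_k),\pi(y_k)$ with
\[
\left\Vert p_j\!\left(\Phi_1(\hat x_1),\dots,\Psi_r(\hat y_r),\mathbf{z}\right)\right\Vert\le C_j,\qquad(j=1,\dots,J).
\]
Setting $\bar x_k=\Phi_k(\hat x_k)$ and $\bar y_k=\Psi_k(\hat y_k)$, and noting any needed conditioning of the free variables $z_i$ can be absorbed (or they may simply be left unchanged, $\bar z_i=z_i$), finishes the proof.

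The main obstacle to watch for is the verification that the substituted polynomial is still covered by Theorem~3.2 of \cite{LoringShulmanSemialg}: one must be careful that the coefficients $1-g_k$, $e_k$, $f_k$ are treated as fixed elements of $M(E)$ rather than variables, and that the notion of homogeneity in that theorem is robust under such linear substitutions. This is essentially bookkeeping, but it is the only place where anything could go wrong; everything else is a direct transcription of the nilpotent argument with Lemma~\ref{lem:xyxstar-lift-trick} in place of Lemma~\ref{lem:nil-lift-trick}.
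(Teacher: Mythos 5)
Your proposal is correct and follows essentially the same route as the paper: reduce to the corona setting, use Lemma~\ref{lem:xyxstar-lift-trick} to fix the exact relations $\bar{x}_k\bar{y}_k\bar{x}_k^{*}=0$ independently of the chosen lifts, and then apply Theorem~3.2 of \cite{LoringShulmanSemialg} to the substituted polynomials, which remain homogeneous in $\{x_1,\ldots,x_r,y_1,\ldots,y_r\}$ because the conditioning maps are degree-one in those variables. Your explicit remark that the coefficients $e_k$, $f_k$, $1-g_k$ must be treated as fixed elements so homogeneity is preserved is exactly the (unstated) point on which the paper's one-line justification rests.
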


\begin{proof}
Without loss of generality, assume $A=M(E)$ and $I=E$ for some separable
$C^{*}$-algebra $E.$ Now we apply Lemma~\ref{lem:xyxstar-lift-trick}
to each pair $x_{j}$ and $y_{j}$ and find $e_{j},$ $f_{j}$ and
$g_{j}$ in $M(E)$ so that, given any lifts $\tilde{x}_{j}$ and
$\tilde{y}_{j}$ of $\pi(x_{j})$ and $\pi(y_{j}),$ setting
\[
\bar{x}_{j}=\tilde{x}_{j}(1-g_{j})
\]
and
\[
\bar{y}_{j}=e_{j}\tilde{y}_{j}+(1-e_{j})\tilde{y}_{j}f_{j}
\]
produces again lifts of the $\pi(x_{j})$ and $\pi(y_{j})$ with
\[
\bar{x}_{j}\bar{y}_{j}\bar{x}_{j}^{*}=0.
\]
The needed norm conditions
\[
\left\Vert
  p_{j} \left(
    \tilde{x}_{1}(1-g_{1}),\ldots,\tilde{x}_{r}(1-g_{r}),
    e_{1}\tilde{y}_{1}+(1-e_{1})\tilde{y}_{1}f_{1},
    \ldots,
    e_{r}\tilde{y}_{r}+(1-e_{r})\tilde{y}_{r}f_{r},
    \bar{\mathbf{z}}
    \right)
\right\Vert 
\leq C_{j}
\]
involve NC $*$-polynomials that are homogeneous in
$\{x_{1},\ldots,x_{r},y_{1},\ldots,y_{r}\},$
so Theorem~3.2 of \cite{LoringShulmanSemialg}
again finishes the job. 
\end{proof}

\begin{example}
For any $r,$ the $C^{*}$-algebra 
\[
C^{*}\left\langle
x_{1},\ldots x_{r},y_{1},\ldots,y_{r}
\left|
\begin{array}{c}
x_{j}y_{j}x_{j}^{*}=0,\\
\left\Vert \sum x_{j}x_{j}^{*}+y_{j}y_{j}^{*}\right\Vert \leq 1
\end{array}
\right.
\right\rangle 
\]
is projective. In particular, since projective implies residually
finite dimensional, if one could show that the $*$-algebra
\[
\mathbb{C}\left\langle
x_{1},\ldots x_{r},y_{1},\ldots,y_{r}
\left|
\begin{array}{c}
x_{j}y_{j}x_{j}^{*}=0,\\
\left\Vert \sum x_{j}x_{j}^{*}+y_{j}y_{j}^{*}\right\Vert \leq1
\end{array}
\right.
\right\rangle 
\]
is $C^{*}$-representable (as in \cite{popovych2010representability}),
then it would have a separating family of finite dimensional representations. 
\end{example}

\section{The Relations $x_{j}x_{k}=0.$}

We can work with variables that are ``half-orthogonal'' in that any
product $x_{j}x_{k}$ is zero. The $*$-monoid here contains only
monomials of the forms 
\[
x_{j_{1}}x_{j_{2}}^{*}\cdots x_{j_{2N-1}}x_{j_{2N}}^{*},
\ x_{j_{1}}x_{j_{2}}^{*}\cdots x_{j_{2N}}x_{j_{2N+1}}
\]
and their adjoints.

\begin{lem}
\label{lem:conditioningHalfOrthog} 
Suppose $A$ is $\sigma$-unital and $C(A)=M(A)/A.$ If $x_{1}\ldots,x_{r}$
are elements of $M(A)$ so that $x_{j}x_{k}=0$ for all $j$ and $k$ then there
are elements $0\leq f,g\le1$ so that
\[
fg=0
\]
and
\[
fx_{j}g=x_{j}
\]
for all $j.$
\end{lem}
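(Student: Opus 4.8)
The plan is to mimic the structure of the proof of Lemma~\ref{lem:conditioningxyxstar}, using Kasparov's technical theorem to separate the range side from the domain side of the $x_j$. The key observation is that $x_jx_k=0$ for all $j,k$ means that the ranges of all the $x_k$ (as cosets) are annihilated on the left by all the $x_j$; equivalently, writing $R$ for ``the $x_j$'' and $L$ for ``the $x_k^*$'', we have that the family $\{x_j\}$ is orthogonal to the family $\{x_k^*\}$ in the sense $x_j \cdot x_k = (x_j)(x_k^{*})^* $... more directly: $x_j x_k = 0$ says each $x_k$ (thought of as living ``on the right'') is killed by each $x_j$ on the left. So I would first apply the simplified Kasparov technical theorem to the two sequences $\{x_1,\dots,x_r\}$ and $\{x_1,\dots,x_r\}$ (same list, playing both roles), since $x_j x_k = 0$ for all pairs, to obtain $0\le a\ll b\le1$ in $C(A)$ with $x_j a = x_j$ and $b x_k = 0$ for all $j,k$. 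Rewriting $x_j a = x_j$ as $(1-a)x_j^* = 0$, i.e. $x_j^*(1-a)=\,$... I must be careful with sides here: $x_j a = x_j$ gives, taking adjoints, $a x_j^* = x_j^*$, hence $(1-a)x_j^* = 0$.

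Next I would set up a second application of Kasparov to massage $a$ and $b$ into the single pair $f,g$ with $fg=0$ and $fx_jg = x_j$. The natural choice is $g := 1-a$ wait — we need $fx_jg = x_j$, and on the right this should be the ``domain'' side, where $x_j b' = x_j$ for a suitable $b'$; on the left it is the ``range'' side, where $a' x_j = x_j$. From $x_j a = x_j$ we already have the right-multiplier: take $g$ to be (a lift/functional-calculus modification of) something acting as a unit on the right of $x_j$, so $x_j g = x_j$; and we need $f$ acting as a unit on the left, $f x_j = x_j$, which by adjoints is $x_j^* f = x_j^*$, i.e. $(1-f)$ annihilates $x_j^*$ on the left, i.e. $f$ annihilates... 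Let me instead just say: by functional calculus on $a$ from the first Kasparov application we get $a_1 \ll a_2$ with $x_j a_1 = x_j$ and $a_2 b =$ (still) $0$ if $a_2 \le$ something dominated by $a$; then we need the left side. Since $x_j x_k = 0$ also reads as $x_k^* x_j^* = 0$, symmetrically there is $0\le c\le1$ with $x_j^* c = x_j^*$ (so $c x_j = x_j$) and $c$ annihilating the $x_j$ on the appropriate side. The goal $fg=0$ together with $fx_jg = x_j$ then forces $f$ and $g$ to be orthogonal while $f$ is a left unit and $g$ a right unit for the $x_j$; this is exactly the configuration Kasparov delivers when we feed it the orthogonality $\{x_j\}\perp\{x_k\}$ on the correct sides. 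Concretely: apply the simplified Kasparov technical theorem to the families $\{c x_j : j\} = \{x_j\}$ and $\{x_k : k\}$... the cleanest route is to produce $0\le f\ll h\le 1$ and $0\le g\le 1$ with $g \le 1-h$, $f x_j = x_j$, $x_k g = x_k$, whence $fg \le f(1-h) = 0$ since $f\ll h$ means $fh = f$ so $f(1-h)=0$, and then $fx_jg = x_j g = x_j$. Finally lift $f,g$ from $C(A)$ to $M(A)$ preserving the order relations, exactly as in the proofs of Lemmas~\ref{lem:conditioningxyxstar} and~\ref{lem:xyxstar-lift-trick}, via Lemma~1.1.1 of \cite{Loring-lifting-perturbing}.

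The one genuinely delicate bookkeeping point — and the step I expect to be the main obstacle — is getting \emph{both} one-sided unit properties ($fx_j = x_j$ on the left and $x_j g = x_j$ on the right) while simultaneously forcing $f\perp g$, using only the hypothesis $x_jx_k = 0$ and the \emph{simplified} (two-family, orthogonality-only) form of Kasparov's theorem stated in the introduction. The issue is that $x_jx_k=0$ is a single orthogonality between the $x_j$'s on the left and the $x_k$'s on the right, not directly an orthogonality of the \emph{ranges} of the $x_j$ with the \emph{domains} of the $x_j$. So I will need to run Kasparov twice: once to obtain an element that is simultaneously a right-unit on all $x_j$ and a left-annihilator on all $x_j$ (possible precisely because $x_jx_k=0$), and a second time, feeding in the relevant orthogonality derived from the first step, to split off $f$ and $g$ with $fg=0$. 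Once that two-step separation is arranged in the corona $C(A)$, lifting to $M(A)$ and verifying $fg=0$ and $fx_jg=x_j$ is routine functional calculus, identical in spirit to Lemma~\ref{lem:conditioningxyxstar}.

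Assuming the result is stated for the corona first (as in Lemma~\ref{lem:conditioningxyxstar}), the companion ``lift-trick'' lemma and the main theorem in this section will then follow by the same template as Lemmas~\ref{lem:xyxstar-lift-trick} and Theorem~\ref{thm:xyx_star}: given arbitrary lifts $\tilde x_j$ of $\pi(x_j)$, set $\bar x_j = f\tilde x_j g$; then $\pi(\bar x_j) = f_0\pi(x_j)g_0 = \pi(x_j)$, the products $\bar x_j \bar x_k = f\tilde x_j g f \tilde x_k g = 0$ vanish because $gf = 0$, and the remaining soft homogeneous norm relations $\|p_j(\bar{\mathbf x},\mathbf y)\| \le C_j$ are restrictions of $*$-polynomials homogeneous in $\{x_1,\dots,x_r\}$, so Theorem~3.2 of \cite{LoringShulmanSemialg} closes the argument.
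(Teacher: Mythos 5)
Your opening move is exactly right, and it is in fact the whole proof: apply the simplified Kasparov technical theorem to the two families $\{x_{j}\}$ and $\{x_{k}\}$ (the same list playing both roles), which the hypothesis $x_{j}x_{k}=0$ permits, to get $0\le a\ll b\le1$ with $x_{j}a=x_{j}$ and $bx_{k}=0$. At that point you should stop and set $f=1-b$, $g=a$. Then $fx_{j}g=(1-b)x_{j}a=(1-b)x_{j}=x_{j}$ (using $x_{j}a=x_{j}$ on the right and $bx_{j}=0$ on the left), and $fg=(1-b)a=a-ba=0$ because $a\ll b$ means precisely $ba=a=ab$. This one-application argument is the paper's proof.

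The gap in your writeup is everything after that first application. You convince yourself that a second run of Kasparov is needed to ``split off $f$ and $g$ with $fg=0$,'' and you flag the simultaneous left-unit/right-unit/orthogonality requirement as ``the main obstacle,'' but it is not an obstacle at all: the left unit is $1-b$, the right unit is $a$, and their orthogonality is exactly the relation $a\ll b$ that Kasparov has already handed you. Your proposed second application is never actually pinned down --- you do not exhibit two families with vanishing cross-products to feed into the theorem, only the configuration $f\ll h$, $g\le 1-h$ that you would like to come out --- so as written the argument does not close. Two smaller points: the step ``$fg\le f(1-h)=0$'' is not meaningful for a product that need not be self-adjoint and would have to be routed through $fgf^{*}\le f(1-h)f^{*}=0$; and the final lifting of $f,g$ from the corona to $M(A)$ belongs to the companion result, Lemma~\ref{lem:semiOrthog-lift-trick} (where orthogonal positive contractions are lifted to orthogonal positive contractions), not to this conditioning lemma.
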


\begin{proof}
We apply Kasparov's technical theorem to find $a$ and $b$ with 
\[
0\leq a\ll b\leq1
\]
and
\[
x_{j}a=a,\quad bx_{j}=0.
\]
Let $f=1-b$ and $g=a.$
\end{proof}

\begin{lem}
\label{lem:semiOrthog-lift-trick} 
Suppose $A$ is $\sigma$-unital and consider the quotient map
$\pi:M(A)\rightarrow M(A)/A.$
\begin{enumerate}
\item If $x_{1},\ldots,x_{r}$ are elements of $M(A)$ so
that $\pi\left(x_{j}x_{k}\right)=0$ for all $j$ and $k,$ then there
are elements $f$ and $g$ in $M(A)$ with
\begin{equation}
0\leq f,g\leq1,
\label{eq:0LEQf_gLEQ1}
\end{equation}
\begin{equation}
fg=0
\label{eq:fg}
\end{equation}
and
\[
\pi\left(fx_{j}g\right)=\pi\left(x_{j}\right).
\]
\item If $\pi(\tilde{x}_{j})=\pi(x_{j})$ then, if we set
\[
\bar{x}_{j}=f\tilde{x}_{j}g,
\]
we have $\pi(\bar{x}_{j})=\pi(x_{j})$ and 
\[
\bar{x}_{j}\bar{x}_{k}=0
\]
for all $f$ and $g.$ 
\end{enumerate}
\end{lem}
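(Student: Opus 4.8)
The plan is to follow exactly the template of Lemmas~\ref{lem:nil-lift-trick} and \ref{lem:xyxstar-lift-trick}: first solve the conditioning problem downstairs in the corona algebra, then lift the auxiliary positive contractions while keeping the relation $fg=0$, and finally check the two assertions by applying $\pi$ and by a one-line computation.

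For part~(1): in $C(A)=M(A)/A$ the elements $\pi(x_{1}),\ldots,\pi(x_{r})$ satisfy $\pi(x_{j})\pi(x_{k})=0$ for all $j,k$, so Lemma~\ref{lem:conditioningHalfOrthog} applied in $C(A)$ produces $f_{0},g_{0}\in C(A)$ with $0\leq f_{0},g_{0}\leq1$, $f_{0}g_{0}=0$, and $f_{0}\pi(x_{j})g_{0}=\pi(x_{j})$ for all $j$. I then want lifts $f,g\in M(A)$ of $f_{0},g_{0}$ that still satisfy $0\leq f,g\leq1$ and $fg=0$. Note that $f_{0}g_{0}=0$ together with self-adjointness forces $g_{0}f_{0}=(f_{0}g_{0})^{*}=0$, so $f_{0}$ and $g_{0}$ commute and generate a commutative $C^{*}$-algebra; the universal $C^{*}$-algebra on two orthogonal positive contractions is projective, so such a lift exists (this is precisely the kind of conclusion furnished by Lemma~1.1.1 of \cite{Loring-lifting-perturbing}, as used already in the proof of Lemma~\ref{lem:xyxstar-lift-trick}). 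Applying $\pi$ to $fx_{j}g$ then gives $\pi(fx_{j}g)=f_{0}\pi(x_{j})g_{0}=\pi(x_{j})$, which is the claim.

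For part~(2): given any lifts $\tilde{x}_{j}$ of $\pi(x_{j})$, set $\bar{x}_{j}=f\tilde{x}_{j}g$. Then $\pi(\bar{x}_{j})=f_{0}\pi(x_{j})g_{0}=\pi(x_{j})$, so each $\bar{x}_{j}$ is again a lift, and since $gf=(fg)^{*}=0$ we get
\[
\bar{x}_{j}\bar{x}_{k}=f\tilde{x}_{j}(gf)\tilde{x}_{k}g=0
\]
for all $j$ and $k$ (this also corrects the apparent misprint ``for all $f$ and $g$'' in the statement). There is no real obstacle here; the only point needing a moment's care is the simultaneous lift of $f_{0},g_{0}$ preserving positivity, contractivity, and the exact relation $fg=0$ at once, and that is a standard liftability fact already invoked earlier in the paper. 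Everything else is the bookkeeping shared with the two preceding ``lift-trick'' lemmas.
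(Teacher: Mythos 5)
Your proof is correct and follows the paper's own argument essentially verbatim: apply Lemma~\ref{lem:conditioningHalfOrthog} in the corona, lift the two orthogonal positive contractions (the paper simply asserts ``orthogonal positive contractions lift to orthogonal positive contractions,'' which your projectivity remark correctly justifies), and finish with the same two computations. Your explicit note that $gf=(fg)^{*}=0$ is the one detail the paper leaves implicit in the line $\bar{x}_{j}\bar{x}_{k}=f\tilde{x}_{j}gf\tilde{x}_{k}g=0$, and your reading of ``for all $f$ and $g$'' as a misprint for ``for all $j$ and $k$'' is right.
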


\begin{proof}
The products $\pi(x_{j})\pi(x_{k})$ are zero, so
Lemma~\ref{lem:conditioningHalfOrthog} gives us elements
$0\leq f_{0},g_{0}\leq1$ in $C(A)$ with $f_{0}g_{0}=0$ and
\[
f_{0}\pi\left(x_{j}\right)g_{0}=\pi\left(x_{j}\right).
\]
Orthogonal positive contractions lift to orthogonal positive contractions,
so there are $f$ and $g$ in $M(A)$ satisfying (\ref{eq:0LEQf_gLEQ1})
and (\ref{eq:fg}) that are lifts of $f_{0}$ and $g_{0},$ which
means 
\[
\pi\left(fx_{j}g\right)=f_{0}\pi\left(x_{j}\right)g_{0}=\pi\left(x_{j}\right).
\]
With $\bar{x}_{j}$ as indicated,
\[
\pi(\bar{x}_{j})=\pi(f\tilde{x}_{j}g)=f_{0}\pi(x_{j})g_{0}=\pi(x_{j})
\]
and
\[
\bar{x}_{j}\bar{x}_{k}=f\tilde{x}_{j}gf\tilde{x}_{k}g=0.
\]
\end{proof}

\begin{thm}
Suppose $p_{1},\ldots,p_{J}$ are NC $*$-polynomials in infinitely
many variables that are homogeneous in the set of the first $r$ variables,
each with degree of homogeneity $d_{j}$ at least one. Suppose $C_{j}>0$
are real constants. For every $C^{*}$-algebra $A$ and $I\vartriangleleft A$
an ideal, given $x_{1},\ldots,x_{r}$ and $y_{1},y_{2},\ldots$ in
$A$ with 
\[
\pi\left(x_{k}\right)\pi\left(x_{l}\right)=0,\quad(k,l=1,\ldots,r)
\]
and
\[
\left\Vert p_{j}\left(\pi\left(\mathbf{x},\mathbf{y}\right)\right)\right\Vert 
\leq C_{j},
\quad(j=1,\ldots,J)
\]
there are $\bar{x}_{1},\ldots,\bar{x}_{r}$ in $A$ with 
$\pi\left(\bar{\mathbf{x}}\right)=\pi(\mathbf{x})$
and 
\[
\bar{x}_{k}\bar{x}_{l}=0,\quad(k,l=1,\ldots,r)
\]
and
\[
\left\Vert p_{j}\left(\bar{\mathbf{x}},\bar{\mathbf{y}}\right)\right\Vert 
\leq C_{j},
\quad(j=1,\ldots,J).
\]
\end{thm}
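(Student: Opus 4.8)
The plan is to mimic exactly the proofs of the earlier theorems in the paper (the nilpotent case, the $x_jx_k=0$ case is structurally identical). First I would invoke the standard reductions: it suffices to treat the case $A=M(E)$ and $I=E$ for a separable $C^{*}$-algebra $E$, so that $\pi:M(E)\to M(E)/E$ is the corona quotient and Kasparov-type machinery applies. This is the same opening move used in Theorem~\ref{thm:xyx_star} and the preceding theorems, and it is justified by Theorem~10.1.9 of \cite{Loring-lifting-perturbing}.

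Next I would apply Lemma~\ref{lem:semiOrthog-lift-trick} to the family $x_{1},\dots,x_{r}$. Since $\pi(x_k)\pi(x_l)=0$ for all $k,l$, part~(1) of that lemma produces elements $f,g\in M(E)$ with $0\le f,g\le1$ and $fg=0$ and $\pi(f x_j g)=\pi(x_j)$. The crucial feature, recorded in part~(2) of the lemma, is that this choice of $f$ and $g$ works \emph{uniformly}: for \emph{any} lifts $\tilde{x}_j$ of $\pi(x_j)$, the elements $\bar{x}_j:=f\tilde{x}_j g$ are again lifts and automatically satisfy $\bar{x}_j\bar{x}_k=0$. So the relations $\bar{x}_k\bar{x}_l=0$ are taken care of for free, regardless of which lifts we feed in, and the problem is reduced to arranging the remaining norm inequalities.

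The remaining task is to choose the lifts $\tilde{x}_j$ so that, with $\bar{x}_j=f\tilde{x}_j g$, we have $\|p_j(\bar{\mathbf{x}},\bar{\mathbf{y}})\|\le C_j$ for all $j$, where we use any fixed lifts $\bar{\mathbf{y}}$ of $\pi(\mathbf{y})$ (say $\bar y_i=y_i$). Substituting $\bar{x}_j=f\tilde{x}_j g$ into $p_j$ gives a new family of NC $*$-polynomials in the variables $\tilde{x}_1,\dots,\tilde{x}_r$ (with $f,g,\bar{\mathbf{y}}$ as fixed coefficients). Because each original $p_j$ is homogeneous of degree $d_j\ge1$ in $\{x_1,\dots,x_r\}$, each substituted polynomial is still homogeneous in $\{\tilde{x}_1,\dots,\tilde{x}_r\}$ of the same degree $d_j$: every occurrence of some $x_k$ or $x_k^{*}$ is replaced by $f\tilde{x}_k g$ or $g\tilde{x}_k^{*}f$, which contains exactly one factor from $\{\tilde{x}_k,\tilde{x}_k^{*}\}$. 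Hence the norm restrictions $\|p_j(f\tilde{x}_1 g,\dots,f\tilde{x}_r g,\bar{\mathbf{y}})\|\le C_j$ are soft homogeneous relations in the $\tilde{x}_k$, and $C_j>0$, so Theorem~3.2 of \cite{LoringShulmanSemialg} applies: there exist lifts $\tilde{x}_k$ of $\pi(x_k)$ making all these norm inequalities hold.

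Finally I would assemble the conclusion: set $\bar{x}_k=f\tilde{x}_k g$. By Lemma~\ref{lem:semiOrthog-lift-trick}(2), $\pi(\bar{\mathbf{x}})=\pi(\mathbf{x})$ and $\bar{x}_k\bar{x}_l=0$, and by the application of Theorem~3.2 the norm bounds $\|p_j(\bar{\mathbf{x}},\bar{\mathbf{y}})\|\le C_j$ hold, which is exactly what is required. The only step that carries any real content is the verification that homogeneity of $p_j$ in $\{x_1,\dots,x_r\}$ is preserved under the substitution $x_k\mapsto f\tilde{x}_k g$ — and that is immediate from the definition of homogeneity — so I expect no genuine obstacle; this theorem is the $x_jx_k=0$ analogue of the earlier results and its proof is essentially a transcription of the proof of Theorem~\ref{thm:xyx_star} with Lemma~\ref{lem:semiOrthog-lift-trick} in place of Lemma~\ref{lem:xyxstar-lift-trick}.
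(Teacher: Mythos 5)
Your proposal is correct and follows exactly the route the paper intends: the paper's own proof of this theorem is just the remark that it is ``essentially the same as that of Theorem~\ref{thm:xyx_star},'' and your write-up is a faithful transcription of that argument with Lemma~\ref{lem:semiOrthog-lift-trick} replacing Lemma~\ref{lem:xyxstar-lift-trick}. No gaps.
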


\begin{proof}
The proof is essentially the same as that of Theorem~\ref{thm:xyx_star}.
\end{proof}

\end{document}